\xdef\@endgadget#1{{\unskip\nobreak\hfil\penalty50\hskip1em\hbox{}\nobreak
    \hfil#1\parfillskip=0pt\finalhyphendemerits=0\par}}
\def\@qedsymbol{${}_\blacksquare$}
\def\qed{\@endgadget{\@qedsymbol}}
\newtheorem{lemma}{Lemma}[section]
\newtheorem{theorem}[lemma]{Theorem}
\newtheorem{example}[lemma]{Example}
\newtheorem{definition}[lemma]{Definition}
\newtheorem{proposition}[lemma]{Proposition}
\newtheorem{remark}[lemma]{Remark}
\newtheorem{assumption}[lemma]{Assumption}
\newcommand{\mR}{\mathbb{R}}
\newcommand{\bq}{\begin{equation}}
\newcommand{\eq}{\end{equation}}
\DeclareMathOperator{\im}{im}
\DeclareMathOperator{\spa}{span}
\DeclareMathOperator{\sign}{sign}
\newcommand{\bma}{\begin{bmatrix}}
\newcommand{\ema}{\end{bmatrix}}
\def\BibTeX{{\rm B\kern-.05em{\sc i\kern-.025em b}\kern-.08em
    T\kern-.1667em\lower.7ex\hbox{E}\kern-.125emX}}
\title{\LARGE \bf Kron Reduction of Nonlinear Networks}
\author{Arjan van der Schaft, Bart Besselink, Anne-Men Huijzer
\thanks{Arjan van der Schaft, Bart Besselink and Anne-Men Huijzer are all with the Bernoulli Institute for Mathematics, Computer
Science and AI, Jan C. Willems Center for Systems and Control, University of Groningen, PO Box 407, 9700 AK, the
Netherlands
        {\tt\small A.J.van.der.Schaft@rug.nl, b.besselink@rug.nl, m.a.huijzer@rug.nl}}
}
\begin{document}

\maketitle
\thispagestyle{empty}
\pagestyle{empty}

\begin{abstract}
Kron reduction is concerned with the elimination of interior nodes of physical network systems, such as linear resistive circuits. In this paper it is shown how this can be extended to networks with \emph{nonlinear} static relations between the variables associated to the edges of the underlying graph.
\end{abstract}
%

\section{Introduction}
\label{sec1}
In many applications of network theory the following scenario arises. Consider a graph, with pairs of variables associated to its nodes and to its edges. Suppose the set of nodes is split into a set of 'terminals' (called 'boundary' nodes in the present paper) and a complementary set of 'interior' nodes ('central' nodes in this paper). Suppose that one is primarily interested in the relation between the variables at the terminals of the network. Then it is advantageous if one can eliminate the central nodes of the graph, so as to obtain a reduced graph whose node set only consists of the boundary nodes, and which is equivalent to the original network from an input-output point of view (where the inputs and outputs are the variables at the boundary nodes). Such an elimination process is typically referred to as \emph{Kron reduction} \cite{kron}. A related problem was originally studied by Kirchhoff \cite{kirchhoff} in 1847. He considered electrical circuits consisting of linear resistors satisfying Ohm's law $V=RI$, with $V$ and $I$ denoting the voltage, respectively, current, and $R$ the resistance. Furthermore, in nowadays terminology, the voltage potentials, respectively, nodal currents, at the boundary nodes are considered to be the inputs and outputs of the circuit. It is assumed that the nodal currents at the central nodes are equal to zero (no in/outgoing external currents), and that the currents through the edges add up to zero at every central node ('Kirchhoff's current law'). Also, the voltages across edges are given as differences of voltage potentials at the head and tail nodes of each edge ('Kirchhoff's voltage law'). Loosely speaking, the question posed (and answered affirmatively) by Kirchhoff was if for every value of the input vector (the voltage potentials at the boundary nodes) there exists a unique vector of voltage potentials at the central nodes, thus determining a unique solution of the currents through, and voltages across, all the resistors of the circuit, and unique nodal currents at the boundary nodes (the vector of outputs).

Other examples of Kron reduction appear in various distribution networks. Also, there is a close relation with network flow theory and network optimization, see e.g. \cite{rockafellar}. Furthermore, instead of static networks such as resistor circuits, one may also consider \emph{dynamic} networks. In fact, Kron reduction is especially known from power networks; see e.g. \cite{doerfler, caliskan} and the references quoted therein. See also \cite{bakker} for Kron reduction of chemical reaction networks.

In the present paper we will consider the general problem of Kron reduction of networks, where the static relations between the variables associated to the edges are not anymore \emph{linear} (like in Ohm's law), but instead \emph{nonlinear}. More precisely, we will assume that the relation between the variables at each edge is given by a differentiable and monotone conductance function. In this way, the problem studied in this paper is related to the theory of monotone networks, see e.g. \cite{minty}, as well as of convex optimization, see e.g. \cite{rockafellar}. However, the approach of the present paper remains within a \emph{differentiable} setting, relying on basic tools from analysis (such as the implicit function theorem), algebraic graph theory (incidence and Laplacian matrices), and linear algebra (Schur complements).

The structure of the paper is as follows. Section \ref{sec2} starts with a very brief recall of Kron reduction of linear static networks, and then proceeds towards the main results for Kron reduction of static nonlinear networks (Theorems \ref{theoremone} and \ref{theoremtwo}). 
%
In Section \ref{sec3} two applications of the main results will be discussed. The first concerns Kron reduction of electrical circuits consisting of nonlinear resistors. A special case occurs if only two boundary nodes are selected. In this case, Kron reduction yields the effective nonlinear conductance between these two nodes. This will be illustrated by a nonlinear electrical circuit example. The second application area concerns memristor networks. Ideal memristors are intrinsically nonlinear elements (if they are linear, then they reduce to ordinary resistors). Hence nonlinear Kron reduction and the computation of the effective memductance provide key tools for the analysis of memristor networks. Finally 
Section \ref{sec5} points to open problems for further research.

\begin{remark}
{\rm
While finishing this paper we came across the recent paper \cite{klimm}, addressing Kron reduction of nonlinear networks from a network optimization perspective. A main result stated in this paper is that Kron reduction of nonlinear networks is often \emph{not} possible. However, it should be emphasized that the relation for the flow $f_j$ through the $j$-th edge is assumed to be of the form $\sign (f_j) \beta_j |f_j|^r = z_h - z_t$, with $z_h,z_t$ the potentials at the head, respectively,  tail node of this edge. For $r=2$ (the main case investigated in \cite{klimm}) this is not a smooth relation as considered in the present paper.
}
\end{remark}

\section{Nonlinear Kron reduction}
\label{sec2}
\subsection{Brief recall of the linear case}
Consider a connected directed graph, with $n$ nodes and $m$ edges, specified by its $n \times m$ incidence matrix $D$. Let the $i$-th component of $z \in \mR^n$ be a real-valued variable associated with the $i$-th node, $i=1,\cdots,n$, and the $j$-th component of $y \in \mR^m$ be a real-valued variable associated to the $j$-th edge, $j=1,\cdots,m$. Impose the relation $y=D^\top z$ ('Kirchhoff's voltage law').

Furthermore, consider an $m \times m$ positive diagonal matrix $\bar{G}$, whose diagonal elements $\bar{g}_j$ specify linear functions $\bar{g}_jy_j$ on the edges. Then consider the Laplacian matrix $L:= D\bar{G}D^\top$; see Theorem \ref{theorem1}. Split the nodes into central nodes ('$C$') and boundary nodes ('$B$'), and correspondingly split $z=(z_C,z_B)$. Then consider the constraints $D_C\bar{G}D^\top z=0$ ('nodal currents at central nodes are zero'), where $D_C$ denotes the upper block of $D$ corresponding to the central nodes. Solving these constraints corresponds to taking the Schur complement of the Laplacian matrix $L=D\bar{G}D^\top$ with respect to its $(C,C)$ (left-upper) block. The crucial observation is that such a Schur complement is again a Laplacian matrix, corresponding to a graph with node set given by the boundary nodes of the original graph, and with a new set of edges in between these nodes.
To the best of our knowledge this was first proven in \cite{vds10}, making essential use of an argument in \cite{willemsverriest}.

\subsection{Nonlinear networks}
For the purpose of this paper a \emph{nonlinear network} is defined as follows. As in the linear case, consider a connected directed graph, specified by an $n \times m$ incidence matrix $D$, and vectors $z \in \mR^n$ and $y \in \mR^m$ such that $y=D^\top z$. However, differently from the linear case, consider for each $j$-th edge a scalar twice differentiable \emph{strongly convex} function $G_j(y_j)$, leading to strictly monotone 'conductance' functions $g_j(y_j)= \frac{\partial G_j}{\partial y_j}(y_j), j=1, \cdots, m$. (Conversely, one can start with strictly monotone functions $g_j(y_j)$, and integrate them to strongly convex functions $G_j(y_j)$.) Summarizing:
\begin{definition}[Nonlinear network]
\label{nonnet}
A \emph{nonlinear network} is defined by a connected directed graph with $n \times m$ incidence matrix $D$, where the $j$-th edge is endowed with a twice differentiable strongly convex function $G_j(y_j), j=1, \cdots,m$.
\end{definition}
\begin{remark}
{\rm
A more abstract and intrinsic definition of the vectors $z \in \mR^n$ and $y\in \mR^m$ corresponding to the nodes, respectively, edges of the graph can be given as follows. Consider the space of functions from the node set of the graph to $\mR$; denoted by $C_0$ (the {\it node space}). Obviously, this is a linear space, which can be identified with $\mR^n$. Denote the dual space by $C^0$. Then $z \in C^0$. Analogously, consider the space of functions from the edge set of the graph to $\mR$; denoted by $C_1$ (the {\it edge space}). This is a linear space that can be identified with $\mR^m$. Denote the dual space by $C^1$; then $y \in C^1$. 
}
\end{remark}

Kron reduction is now defined as follows. Consider a nonlinear network, and define the twice differentiable function $G:\mR^m \to \mR$ as $G(y):=\sum_{j=1}^m G_j(y_j)$, which obviously is strongly convex as well. Subsequently, define the function $K: \mR^n \to \mR$ as
\bq
\label{K}
K(z):= G(D^\top z) .
\eq
Also $K$ is convex, as well as twice differentiable. Furthermore, since $\mathds{1} \in \ker D^\top$ the function $K$ is invariant under \emph{shifts} $z \mapsto z + c \mathds{1}$ for any $c \in \mR$ (implying that $K$ is \emph{not} strictly convex). 
One computes 
\bq
\label{K1}
\frac{\partial K}{\partial z}(z) = D \frac{\partial G}{\partial y}(D^\top z), \quad \frac{\partial^2 K}{\partial z^2}(z) = D \frac{\partial^2 G}{\partial y^2}(D^\top z) D^\top .
\eq
In particular, $\frac{\partial^2 K}{\partial z^2}(z)$ is for every $z$ a \emph{Laplacian matrix} with weight matrix $\frac{\partial^2 G}{\partial y^2}(D^\top z)$; see the Appendix for background on Laplacian matrices. Indeed, $\frac{\partial^2 G}{\partial y^2}(D^\top z)$ is \emph{diagonal}, since $G(y):=\sum_{j=1}^m G_j(y_j)$, and its diagonal elements are \emph{positive}, since the functions $G_j$ are strongly convex.
\begin{remark}
\label{resis}
{\rm In the context of a \emph{nonlinear resistor network}, to be discussed in more detail in Section \ref{sec3}, $z \in C^0$ is the vector of \emph{voltage potentials} associated with the nodes, and $y=D^\top z \in C^1$ is the vector of \emph{voltages} across the edges. The functions $G_j$ define the strictly monotone conductance functions $I_j=\frac{\partial G_j}{\partial y_j}(y_j) \in C_1$, with $I_j$ the \emph{current} through the $j$-th edge. Furthermore, $\frac{\partial K}{\partial z}(z) \in C_0$ is the vector of \emph{nodal} currents at the nodes.
}
\end{remark}

Consider now a \emph{splitting of the node set} into a set of 'boundary' nodes, and the complementary set of interior or 'central' nodes; the last ones to be eliminated by Kron reduction. Correspondingly, split the vector $z$ into $z=(z_C,z_B)$, with $B$ referring to the boundary nodes, and $C$ to the remaining central nodes. Now, consider the \emph{constraints}
\bq
\label{constraint}
0 = \frac{\partial K}{\partial z_C}(z_C,z_B) .
\eq
(In the nonlinear resistor network interpretation this amounts to setting the nodal currents at the central nodes equal to zero.)
By strong convexity of $G_j, j=1, \cdots,m$, and the fact that the graph is assumed to be connected, it follows from \eqref{K1} (see Theorem \ref{theorem1}) that
\bq
\frac{\partial^2 K}{\partial z^2_C}(z_C,z_B) >0 .
\eq
Hence by the Implicit Function theorem, if there exists $\bar{z}_C,\bar{z}_B$ satisfying \eqref{constraint}, then locally around $\bar{z}_C,\bar{z}_B$ one may solve the constraint equations \eqref{constraint} uniquely for $z_C$, and express $z_C$ as a smooth function $z_C(z_B)$ of $z_B$. A standing assumption throughout this paper is that this solvability is \emph{global}.
\begin{assumption}
\label{ass1}
For each $z_B$ the constraint equations \eqref{constraint} can be solved globally for $z_C$, resulting in a differentiable mapping $z_C(z_B)$ satisfying 
\bq
\label{constraintsolved}
0 = \frac{\partial K}{\partial z_C}(z_C(z_B),z_B), \quad \mbox{ for all } z_B .
\eq
\end{assumption}

\smallskip

By differentiating \eqref{constraintsolved} with respect to $z_B$ we obtain
\bq
0 = \frac{\partial^2 K}{\partial z^2_C}(z_C(z_B),z_B) \frac{\partial z_C}{\partial z_B}(z_B) \, + 
 \frac{\partial^2 K}{\partial z_C \partial z_B}(z_C(z_B),z_B) .
\eq
This implies
\bq
\label{zCzB}
\frac{\partial z_C}{\partial z_B}(z_B) \!= 
\!- \! \left[\frac{\partial^2 K}{\partial z^2_C}(z_C(z_B),z_B)\right]^{-1} \! \! \!\frac{\partial^2 K}{\partial z_B \partial z_C}(z_C(z_B),z_B).
\eq
Now define the function
\bq
\widehat{K}(z_B) := K(z_C(z_B),z_B) .
\eq
It follows from Lemma \ref{lemmaA} that $\widehat{K}$ is convex, and from Lemma \ref{lemmaB} that $\widehat{K}$ is invariant under shifts $z_B \mapsto z_B + c \mathds{1}$.
Furthermore
\bq
\label{appe}
\begin{array}{l}
\frac{\partial \widehat{K}}{\partial z_B}(z_B)  =  \frac{\partial K}{\partial z_C}(z_C(z_B),z_B) \frac{\partial z_C}{\partial z_B}(z_B) +  \frac{\partial K}{\partial z_B}(z_C(z_B),z_B)\\[3mm]
 \qquad \qquad = \frac{\partial K}{\partial z_B}(z_C(z_B),z_B),
 \end{array}
\eq
since $0=\frac{\partial K}{\partial z_C}(z_C(z_B),z_B)$. Hence
\bq
\label{hat}
\frac{\partial^2 \widehat{K}}{\partial z^2_B}(z_B) = 
\quad \frac{\partial^2 K}{\partial z^2_B}(z_C(z_B),z_B) + \frac{\partial^2 K}{\partial z_B \partial z_C}(z_C(z_B),z_B) \frac{\partial z_C}{\partial z_B}(z_B)  .
\eq
By using \eqref{zCzB}, equation \eqref{hat} implies
\bq
\label{Schur}
\begin{array}{l}
\frac{\partial^2 \widehat{K}}{\partial z^2_B}(z_B) = \frac{\partial^2 K}{\partial z^2_B}(z_C(z_B),z_B) \, -\\
\frac{\partial^2 K}{\partial z_B \partial z_C}(z_C(z_B),z_B) \cdot \left(\frac{\partial^2 K}{\partial z^2_C}(z_C(z_B),z_B)\right)^{-1} \cdot 
\frac{\partial^2 K}{\partial z_C \partial z_B}(z_C(z_B),z_B) .
\end{array}
\eq
For fixed $z_B$ this can be recognized as the \emph{Schur complement} of the Laplacian matrix $\frac{\partial^2 K}{\partial z^2}(z_C(z_B),z_B)$ with respect to its sub-block $\frac{\partial^2 K}{\partial z^2_C}(z_C(z_B),z_B)$. 

Since any Schur complement of a Laplacian matrix is again a Laplacian matrix, cf. Theorem 3.1 in \cite{vds10} as summarized in Appendix Theorem \ref{theorem1}, it follows that for any fixed $z_B$ there exists an incidence matrix $\widehat{D}$ of a \emph{reduced graph}, called the \emph{Kron reduced graph}, with the \emph{same} boundary nodes but \emph{without} central nodes, such that
\bq
\label{Laplacehat}
\frac{\partial^2 \widehat{K}}{\partial z^2_B}(z_B) = \widehat{D} W(z_B) \widehat{D}^\top,
\eq
where $W(z_B)$ is a diagonal matrix with positive elements. In fact, this reduced graph has an edge between node $i$ and $k$ if and only if the $(i,k)$-th element of the Schur complement \eqref{Schur} is different from zero. Hence for any small variation of $z_B$ edges will remain edges, while new edges may appear when zero elements in \eqref{Schur} become non-zero. This motivates to make throughout this paper the following assumption.
\begin{assumption}
\label{ass2}
The Kron reduced graph has the {\it same} incidence matrix $\widehat{D}$ (with $\widehat{m}$ edges) for any $z_B$.
\end{assumption}

As summarized in Theorem \ref{theorem1}, since the original graph is assumed to be connected, also the Kron reduced graph is connected. Hence $\ker \widehat{D}^\top = \spa \mathds{1}$. Since $\widehat{K}$ is invariant under shifts $z_B \mapsto z_B + c \mathds{1}$, it thus follows that $W(z_B)$ only depends on $\widehat{D}^\top z_B$. Hence, one can define the diagonal matrix $\widehat{W}(\hat{y}), \hat{y} \in \mR^{\hat{m}}$, such that
\bq
W(z_B)= \widehat{W}(\widehat{D}^\top z_B) .
\eq
Note that $\hat{y} \in \mR^{\hat{m}}$ is a vector of variables corresponding to the $\hat{m}$ edges of the Kron reduced graph.

Furthermore, since $\widehat{K}$ is invariant under shifts $z_B \mapsto z_B + c \mathds{1}$, there exists a twice differentiable function $\widehat{G}(\hat{y})$ such that 
\bq
\label{fact}
\widehat{K}(z_B) = \widehat{G} (\widehat{D}^\top z_B) .
\eq
By direct computation one obtains
\bq
\label{hessK}
\frac{\partial^2 \widehat{K}}{\partial z^2_B}(z_B) = \widehat{D} \frac{\partial^2 \widehat{G}}{\partial \hat{y}^2}(\widehat{D}^\top z_B) \widehat{D}^\top .
\eq
Now compare the expressions \eqref{hessK} and \eqref{Laplacehat}. First note the following simple linear-algebraic lemma.
\begin{lemma}
\label{lemma}
Let $B$ be a matrix, and let $F$ be a matrix such that $\im F=\ker B$. Then $BAB^\top =0$ for a symmetric matrix $A$ if and only if $A$ is of the form $A=FSF^\top$ for a symmetric matrix $S$. In particular, if $\ker B=0$ then $A=0$.
\end{lemma}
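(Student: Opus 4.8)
The plan is to treat the two implications separately, with essentially all of the work in the reverse direction. The forward implication is immediate: since the columns of $F$ span $\ker B$ we have $BF=0$, so for any symmetric $S$ one gets $BAB^\top = BFSF^\top B^\top = (BF)S(BF)^\top = 0$. The ``in particular'' clause is also quickly disposed of, because $\ker B = 0$ forces $\im F = 0$, so the only matrix of the form $FSF^\top$ is the zero matrix; I will recover it below as the degenerate case of the general argument.

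For the converse I would work in coordinates adapted to the orthogonal splitting $\mR^q = \im B^\top \oplus \ker B$ (with $q$ the number of columns of $B$), using $(\ker B)^\perp = \im B^\top$ and the fact that $\im F = \ker B$. Relative to orthonormal bases of the two summands the matrix $B$ takes the block form $\bma B_1 & 0\ema$ with $B_1$ injective, and the symmetric $A$ becomes $\bma A_{11} & A_{12} \\ A_{12}^\top & A_{22}\ema$. A one-line block computation then gives $BAB^\top = B_1 A_{11} B_1^\top$, and since $B_1$ has trivial kernel this vanishes if and only if $A_{11}=0$. Expressing the orthonormal basis of $\ker B$ through the columns of $F$ realises the surviving $A_{22}$-block as $FSF^\top$, and in the extreme case $\ker B = 0$ there is no $F$-block at all, so one reads off directly $A = A_{11} = 0$, which is the final assertion.

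The step I expect to be the genuine obstacle is passing from ``$A_{11}=0$'' to the full conclusion $A = FSF^\top$, equivalently to $\im A \subseteq \ker B$. The hypothesis $BAB^\top = 0$ on its own annihilates only the $(\im B^\top,\im B^\top)$-block $A_{11}$ and constrains the mixed block $A_{12}$ not at all, and any nonzero $A_{12}$ yields a symmetric $A$ with $BAB^\top=0$ that is \emph{not} of the form $FSF^\top$. To close this gap I would look for extra structure beyond the bare statement: the cleanest sufficient condition is positive semidefiniteness of $A$, for which $v^\top A v = 0$ on $\im B^\top$ upgrades to $A\,\im B^\top = 0$ and hence forces $A_{12}=0$ as well; alternatively one could strengthen the hypothesis to $BA=0$, which is exactly $\im A \subseteq \ker B$ and makes the factorisation $A=FSF^\top$ (after symmetrising a first factorisation $A=FM$) routine. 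I would therefore inspect how $A$ actually arises in the comparison of \eqref{hessK} and \eqref{Laplacehat}, since it is one of these additional properties, rather than $BAB^\top=0$ by itself, that I expect to make the ``only if'' direction go through.
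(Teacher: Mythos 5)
The paper gives no proof of this lemma at all --- it is introduced as a ``simple linear-algebraic lemma'' and stated without argument --- so there is no official proof to compare yours against; your proposal is the relevant analysis, and it is essentially correct, including precisely at the point where you decline to complete it. Your ``if'' direction is right ($\im F = \ker B$ gives $BF=0$, hence $B F S F^\top B^\top = 0$), your proof of the ``in particular'' clause is right, and the obstacle you identify in the ``only if'' direction is not a defect of your approach but a defect of the statement: the lemma as written is false. Concretely, take
\[
B = \bma 1 & 0 \ema, \qquad F = \bma 0 \\ 1 \ema, \qquad A = \bma 0 & 1 \\ 1 & 0 \ema .
\]
Then $\im F = \ker B$, $A$ is symmetric and $BAB^\top = 0$, but $FSF^\top = \bma 0 & 0 \\ 0 & s \ema$ can never equal $A$. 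As your block computation shows, the hypothesis $BAB^\top = 0$ annihilates only the $(\im B^\top,\im B^\top)$-block of $A$ and leaves the mixed block free; the correct conclusion is that $A = FM + M^\top F^\top$ for some (not necessarily symmetric) matrix $M$, which is strictly weaker than $A = FSF^\top$.

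Your closing paragraph also locates the consequences correctly. In the acyclic case ($\ker \widehat{D} = 0$, i.e.\ the proposition following the lemma and Theorem \ref{theoremone}) only the ``in particular'' clause is invoked, and that part of the lemma is true, so those results are unaffected. The general case is where the false direction enters: \eqref{diagonal1} is exactly the ``only if'' conclusion applied to $A = \widehat{W}(\hat{y}) - \frac{\partial^2 \widehat{G}}{\partial \hat{y}^2}(\hat{y})$ and $B = \widehat{D}$, and by the above it should carry additional cross terms $FM(\hat{y}) + M^\top(\hat{y})F^\top$, which then propagate into Theorem \ref{theoremtwo}. Your two candidate repairs are both sound sufficient conditions: if $A$ is positive semidefinite, a vanishing diagonal block forces the corresponding off-diagonal blocks to vanish; if instead $BA=0$, then $\im A \subseteq \im F$, and symmetry gives the same inclusion for the row space, whence $A = FSF^\top$ with $S$ symmetric. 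Neither hypothesis is obviously satisfied by the $A$ arising from comparing \eqref{hessK} with \eqref{Laplacehat}; the alternative, staying within the paper's strategy, would be to exploit the remaining freedom in $\widehat{G}$ (its mixed second derivatives transverse to $\im \widehat{D}^\top$ are likewise not fixed by \eqref{fact}) to absorb the cross terms, at the price of integrability conditions beyond \eqref{integ}. Either way your diagnosis stands: the lemma as stated needs either a weaker conclusion or a stronger hypothesis, and any blind ``proof'' of its ``only if'' half would necessarily contain an error.
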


Let us now first proceed under the simplifying assumption $\ker \widehat{D}=0$; i.e., the Kron reduced graph does not contain cycles. Equivalently, assume that $\im \widehat{D}^\top= \mR^{\hat{m}}$.
Then we obtain the following proposition.
\begin{proposition}
If $\ker \widehat{D}= \{0\}$ then $\widehat{G}$ satisfying \eqref{fact} is uniquely determined. Furthermore, $\widehat{W}$ is given as
\bq
\label{diagonal}
\widehat{W}(\hat{y})= \frac{\partial^2 \widehat{G}}{\partial \hat{y}^2}(\hat{y}) .
\eq
\end{proposition}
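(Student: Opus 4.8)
The plan is to exploit the surjectivity of $\widehat{D}^\top$ that is equivalent to the hypothesis, and then to match the two available expressions for the Hessian of $\widehat{K}$ by means of Lemma \ref{lemma}. As a preliminary I would record the elementary fact that $\ker \widehat{D} = \{0\}$ is equivalent to $\im \widehat{D}^\top = \mR^{\hat{m}}$, i.e.\ a matrix is injective precisely when its transpose is surjective. In particular, as $z_B$ ranges over the boundary-node space, the vector $\hat{y} = \widehat{D}^\top z_B$ attains every value in $\mR^{\hat{m}}$; this observation is used twice below.

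For uniqueness, suppose $\widehat{G}_1$ and $\widehat{G}_2$ both satisfy \eqref{fact}. Then $\widehat{G}_1(\widehat{D}^\top z_B) = \widehat{K}(z_B) = \widehat{G}_2(\widehat{D}^\top z_B)$ for every $z_B$, so $\widehat{G}_1$ and $\widehat{G}_2$ agree on $\im \widehat{D}^\top = \mR^{\hat{m}}$, whence $\widehat{G}_1 = \widehat{G}_2$. This already settles the first assertion.

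For the formula \eqref{diagonal}, I would compare the two expressions \eqref{hessK} and \eqref{Laplacehat} for $\frac{\partial^2 \widehat{K}}{\partial z_B^2}(z_B)$, using $W(z_B) = \widehat{W}(\widehat{D}^\top z_B)$. Subtracting them gives, for every fixed $z_B$,
\bq
\widehat{D}\left[\frac{\partial^2 \widehat{G}}{\partial \hat{y}^2}(\widehat{D}^\top z_B) - \widehat{W}(\widehat{D}^\top z_B)\right]\widehat{D}^\top = 0 .
\eq
Writing $A := \frac{\partial^2 \widehat{G}}{\partial \hat{y}^2}(\widehat{D}^\top z_B) - \widehat{W}(\widehat{D}^\top z_B)$, which is symmetric (a Hessian minus a diagonal matrix), this is precisely $\widehat{D} A \widehat{D}^\top = 0$. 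Applying Lemma \ref{lemma} with $B = \widehat{D}$, and invoking $\ker \widehat{D} = \{0\}$ so that its final ``in particular'' clause applies, forces $A = 0$. Thus $\widehat{W}(\widehat{D}^\top z_B) = \frac{\partial^2 \widehat{G}}{\partial \hat{y}^2}(\widehat{D}^\top z_B)$ for all $z_B$, and since $\widehat{D}^\top$ is surjective this identity holds at every $\hat{y} \in \mR^{\hat{m}}$, yielding \eqref{diagonal}.

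I do not expect a genuine obstacle: the argument is a direct application of Lemma \ref{lemma} together with the surjectivity of $\widehat{D}^\top$. The only point requiring a little care is the passage from ``the identity holds at all points of the form $\widehat{D}^\top z_B$'' to ``the identity holds for every $\hat{y}$,'' which is exactly where the hypothesis $\ker \widehat{D} = \{0\}$ (equivalently $\im \widehat{D}^\top = \mR^{\hat{m}}$) enters a second time. Notably, the acyclicity assumption is what makes the reduced weight matrix $\widehat{W}$ coincide \emph{globally} with the Hessian of $\widehat{G}$; in the presence of cycles ($\ker \widehat{D} \neq \{0\}$) Lemma \ref{lemma} only gives $A = FSF^\top$, and this rigidity is lost.
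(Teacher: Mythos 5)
Your proof is correct and follows exactly the paper's argument: uniqueness from the surjectivity of $\widehat{D}^\top$ (equivalent to $\ker \widehat{D} = \{0\}$), and the identity \eqref{diagonal} by equating \eqref{hessK} with \eqref{Laplacehat} and applying the final clause of Lemma \ref{lemma}. The paper states this in two sentences; you have simply filled in the same steps in detail, including the (correct) observation that surjectivity of $\widehat{D}^\top$ is used a second time to pass from points of the form $\widehat{D}^\top z_B$ to all of $\mR^{\hat{m}}$.
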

\begin{proof}
Unicity of $\widehat{G}: \mR^{\hat{m}} \to \mR$ directly follows from $\im \widehat{D}^\top= \mR^{\hat{m}}$. Lemma \ref{lemma} yields \eqref{diagonal}.
\end{proof}

This, in turn, leads to the following proposition.
\begin{proposition}
\label{decomposition}
By \eqref{diagonal} and the fact that $\widehat{W}$ is diagonal
\bq
\frac{\partial^2 \widehat{G}}{\partial \hat{y}_i \partial \hat{y}_k}(\hat{y})=0
\eq
for $i \neq k$. This implies that there exist twice differentiable functions $\widehat{G}_j(\hat{y}_j), j=1, \cdots,\hat{m},$ such that
\bq
\label{sum}
\widehat{G}(\hat{y}) = \widehat{G}_1(\hat{y}_1) + \cdots + \widehat{G}_{\hat{m}}(\hat{y}_{\hat{m}}) .
\eq
Furthermore, since the diagonal elements of $W(z_B)$ are all positive, $\widehat{G}_j, j=1, \cdots,\hat{m}$, are strongly convex functions.
\end{proposition}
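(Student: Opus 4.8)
The content of the proposition that actually requires work is the passage from the statement that all mixed second partials of $\widehat{G}$ vanish to the separated form \eqref{sum}; the vanishing of the mixed partials themselves is already justified in the proposition text from \eqref{diagonal} and the diagonality of $\widehat{W}$. So I would take as given that $\frac{\partial^2 \widehat{G}}{\partial \hat{y}_i \partial \hat{y}_k}(\hat{y}) = 0$ for all $i \neq k$ and all $\hat{y} \in \mR^{\hat{m}}$, and the plan is to turn this into a genuine separation of variables. First I would read the hypothesis as the assertion that, for each fixed $j$, the partial derivative $\frac{\partial \widehat{G}}{\partial \hat{y}_j}$ has vanishing derivative in every direction $\hat{y}_i$ with $i \neq j$. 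Since the domain $\mR^{\hat{m}}$ is connected (indeed convex), this forces $\frac{\partial \widehat{G}}{\partial \hat{y}_j}$ to be a function of $\hat{y}_j$ alone; write $\frac{\partial \widehat{G}}{\partial \hat{y}_j}(\hat{y}) = g_j(\hat{y}_j)$, where $g_j$ is differentiable because $\widehat{G}$ is twice differentiable.

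Next I would construct the candidate summands by integration. Define $\widehat{G}_j(\hat{y}_j) := \int_0^{\hat{y}_j} g_j(s)\, ds$ for $j = 1, \dots, \hat{m}$; each $\widehat{G}_j$ is twice differentiable with $\widehat{G}_j'(\hat{y}_j) = g_j(\hat{y}_j)$. To verify the claimed identity, I would consider the difference $H(\hat{y}) := \widehat{G}(\hat{y}) - \sum_{j=1}^{\hat{m}} \widehat{G}_j(\hat{y}_j)$. A direct computation gives $\frac{\partial H}{\partial \hat{y}_j}(\hat{y}) = g_j(\hat{y}_j) - g_j(\hat{y}_j) = 0$ for every $j$, so the gradient of $H$ vanishes identically. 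On the connected domain $\mR^{\hat{m}}$ this forces $H$ to equal a constant $c$, and absorbing $c$ into any one of the summands (say replacing $\widehat{G}_1$ by $\widehat{G}_1 + c$) yields exactly \eqref{sum}.

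Finally, the strong convexity of each $\widehat{G}_j$ is immediate: differentiating \eqref{sum} twice in $\hat{y}_j$ gives $\widehat{G}_j''(\hat{y}_j) = \frac{\partial^2 \widehat{G}}{\partial \hat{y}_j^2}(\hat{y}) = \widehat{W}_{jj}(\hat{y})$, which is a diagonal entry of $\widehat{W}(\widehat{D}^\top z_B) = W(z_B)$ and hence strictly positive by the already established positivity of the diagonal of $W(z_B)$. A scalar function with everywhere positive second derivative is strongly convex in the sense used here, which completes the argument.

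I do not expect a deep obstacle in this proposition; the one step requiring genuine care is the inference that each first partial $\frac{\partial \widehat{G}}{\partial \hat{y}_j}$ depends on a single variable, since this is where the separable structure is extracted and where connectedness of $\mR^{\hat{m}}$ is essential. The subsequent integration and the zero-gradient-implies-constant step are routine applications of the fundamental theorem of calculus on the convex domain $\mR^{\hat{m}}$.
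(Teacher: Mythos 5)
Your proposal is correct and follows essentially the same route as the paper's proof: infer from the vanishing mixed partials that each $\frac{\partial \widehat{G}}{\partial \hat{y}_j}$ depends only on $\hat{y}_j$, define $\widehat{G}_j$ by integrating from $0$, and conclude the decomposition up to a constant absorbed into one summand. Your explicit zero-gradient-implies-constant step and the verification of strong convexity via the positive diagonal of $\widehat{W}$ (valid here since $\im \widehat{D}^\top = \mR^{\hat{m}}$ under the acyclicity assumption) merely spell out details the paper leaves implicit.
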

\begin{proof}
Since $\frac{\partial^2 \widehat{G}}{\partial \hat{y}_i \partial \hat{y}_k}(\hat{y})=0$ for all $i \neq k$ it follows that $\frac{\partial \widehat{G}}{\partial \hat{y}_j}(\hat{y})$ only depends on $\hat{y}_j$. Thus
\bq
\frac{\partial \widehat{G}}{\partial \hat{y}_j}(\hat{y}) = \widehat{g}_j(\hat{y}_j), \quad j=1, \cdots \hat{m}
\eq
for some scalar functions $\widehat{g}_j$. Then define
\bq
\widehat{G}_j(\hat{y}_j) := \int_0^{\hat{y}_j} \widehat{g}_j(v_j) d v_j, \quad j=1, \cdots \hat{m}
\eq
It follows that
\bq
\frac{\partial}{\partial \hat{y}_j}\left[ \widehat{G} - (\widehat{G}_1 + \cdots \widehat{G}_{\hat{m}}) \right](\hat{y}) =0, \quad j=1, \cdots \hat{m},
\eq
and thus \eqref{sum} holds up to a constant (which can be incorporated in any of the functions $\widehat{G}_j$).
\end{proof}

This is summarized as follows.
\begin{theorem}
\label{theoremone}
Consider a nonlinear network with nodes split into boundary and central nodes. Let Assumptions \ref{ass1} and \ref{ass2} be satisfied. Then the constraint equations \eqref{constraint}, where $K$ is defined by \eqref{K}, lead to a Kron reduced graph with incidence matrix $\widehat{D}$ and $\hat{m}$ edges. Assume that $\ker \widehat{D}=0$ (i.e., the reduced graph does not have cycles), then there exist twice differentiable strongly convex functions $\widehat{G}_j,  j=1, \cdots, \hat{m},$ resulting in a reduced nonlinear network as in Definition \ref{nonnet}.
\end{theorem}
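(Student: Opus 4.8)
The plan is to assemble into a single statement the chain of constructions carried out in the preceding pages, checking at each link that the stated hypotheses supply exactly what is needed. First, under Assumption \ref{ass1} the constraint equations \eqref{constraint} are globally solvable for $z_C$, yielding the differentiable map $z_C(z_B)$; here the positive definiteness of $\frac{\partial^2 K}{\partial z_C^2}$, coming from strong convexity of the $G_j$ together with connectivity, is what makes this map differentiable through the Implicit Function theorem. I would then introduce $\widehat{K}(z_B):=K(z_C(z_B),z_B)$ and record, via the computation leading to \eqref{Schur}, that its Hessian is precisely the Schur complement of the Laplacian $\frac{\partial^2 K}{\partial z^2}$ with respect to its $(C,C)$ block.

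Next I would invoke Theorem \ref{theorem1}: the Schur complement of a Laplacian is again a Laplacian, which produces an incidence matrix $\widehat{D}$ and the factorization \eqref{Laplacehat}, while connectivity of the original graph transfers to connectivity of the reduced graph. Assumption \ref{ass2} then freezes $\widehat{D}$ across all $z_B$, so the edge count $\hat{m}$ is well defined. Using that $\widehat{K}$ is shift-invariant (Lemma \ref{lemmaB}) and that $\ker \widehat{D}^\top = \spa \mathds{1}$ for the connected reduced graph, $\widehat{K}$ factors as $\widehat{K}(z_B)=\widehat{G}(\widehat{D}^\top z_B)$, i.e.\ \eqref{fact}, and convexity of $\widehat{K}$ (Lemma \ref{lemmaA}) passes to $\widehat{G}$.

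The final—and conceptually most delicate—step is where the no-cycle hypothesis $\ker \widehat{D}=0$ enters, and it does so twice. Comparing the two Hessian expressions \eqref{hessK} and \eqref{Laplacehat} gives $\widehat{D}\left(\frac{\partial^2 \widehat{G}}{\partial \hat{y}^2}-\widehat{W}\right)\widehat{D}^\top=0$; applying Lemma \ref{lemma} with $B=\widehat{D}$ and $\ker\widehat{D}=0$ forces the middle factor to vanish, giving \eqref{diagonal}, so the Hessian of the reduced potential $\widehat{G}$ equals the diagonal matrix $\widehat{W}$. Simultaneously, $\ker\widehat{D}=0$ makes $\widehat{D}^\top$ surjective, which is what renders $\widehat{G}$ uniquely determined by \eqref{fact}. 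Since $\frac{\partial^2 \widehat{G}}{\partial \hat{y}^2}$ is then diagonal, Proposition \ref{decomposition} yields the separable decomposition \eqref{sum} with each summand $\widehat{G}_j$ strongly convex, the diagonal entries of $\widehat{W}$ being positive. Together with $\widehat{D}$ these $\widehat{G}_j$ constitute a reduced nonlinear network in the sense of Definition \ref{nonnet}, completing the argument. I expect the only genuine subtlety to be the bookkeeping around why separability cannot be taken for granted without $\ker\widehat{D}=0$: Lemma \ref{lemma} shows that in the presence of cycles the diagonal weight $\widehat{W}$ and the Hessian $\frac{\partial^2\widehat{G}}{\partial\hat{y}^2}$ may differ by a term $FSF^\top$ supported on $\ker\widehat{D}$, so that $\widehat{G}$ need be neither unique nor separable—this is precisely the obstruction that the hypothesis removes.
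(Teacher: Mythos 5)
Your proposal is correct and follows essentially the same route as the paper: Assumption \ref{ass1} plus the Implicit Function theorem give $z_C(z_B)$, the Hessian of $\widehat{K}$ is identified as the Schur complement \eqref{Schur} and hence a Laplacian (Theorem \ref{theorem1}), Assumption \ref{ass2} and shift invariance (Lemma \ref{lemmaB}) yield the factorization \eqref{fact}, and the no-cycle hypothesis is used exactly as in the paper---via Lemma \ref{lemma} to force \eqref{diagonal} and via surjectivity of $\widehat{D}^\top$ for uniqueness of $\widehat{G}$, after which Proposition \ref{decomposition} gives the separable strongly convex $\widehat{G}_j$. Your closing remark on the $FSF^\top$ obstruction in the cyclic case also matches the paper's discussion preceding Theorem \ref{theoremtwo}.
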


Next consider the \emph{general} case where $\ker \widehat{D}$ need not be equal to $0$, i.e., the Kron reduced graph may have cycles. In this case, \eqref{diagonal} need not hold for $\widehat{G}$ satisfying \eqref{fact}. On the other hand, in this case $\widehat{G}: \mR^{\hat{m}} \to \mR$ is uniquely determined \emph{only} on the subspace $\im \widehat{D}^\top$, and we may exploit this freedom in choosing $\widehat{G}$ such that \eqref{diagonal} becomes satisfied.

First note that by application of Lemma \ref{lemma} we obtain in the general case, instead of \eqref{diagonal}, the extended identity
\bq
\label{diagonal1}
\widehat{W}(\hat{y}) = \frac{\partial^2 \widehat{G}}{\partial \hat{y}^2}(\hat{y}) + FS(\hat{y})F^\top,
\eq
for some symmetric matrix $S(\hat{y})$, where $\im F= \ker \widehat{D}$. 

Denote the number of boundary nodes by $n_B$. Since $\ker \widehat{D}^\top = \spa \mathds{1}$ one can perform linear coordinate changes of $\hat{z}$ and $\hat{y}$ such that the $n_B \times \hat{m}$ matrix $\widehat{D}$ and the matrix $F$ in the new coordinates take the form
\bq
\widehat{D} = \bma I_\ell & 0 \\[2mm] 0 & 0 \ema, \qquad F= \bma 0 \\[2mm] I_{\hat{m} - \ell} \ema,
\eq
where $\ell = n_B -1$. In such coordinates $\im \widehat{D}^\top = \im \bma I_\ell \\ 0 \ema$. Since $\widehat{G}: \mR^{\hat{m}} \to \mR$  is fixed by \eqref{fact} \emph{only} on the subspace $\im \widehat{D}^\top \subset \mR^{\widehat{m}}$, it can be chosen \emph{freely} on the complementary subspace $\im \bma 0_\ell \\ I_{\hat{m} - \ell} \ema$. Using the coordinate expression for $\im F$ it follows that $\widehat{G}$ can be (re-)defined in such a way that the last term in \eqref{diagonal1} is compensated, \emph{provided} the matrix $S(\hat{y})$ is a Hessian matrix. This, in turn, holds if and only if the elements $S_{ij}(\hat{y})$ of the matrix $S(\hat{y})$ satisfy the following integrability conditions
\bq
\label{integ}
\frac{\partial S_{ij}}{\partial \hat{y}_k}(\hat{y}) = \frac{\partial S_{kj}}{\partial \hat{y}_i}(\hat{y}), \quad i,j,k=1, \cdots, \hat{m},
\eq
see e.g. \cite{vds11} for a proof.
Thus we obtain the following theorem in the general case.
\begin{theorem}
\label{theoremtwo}
Consider a nonlinear network, with the nodes split into boundary and central nodes. Let Assumptions \ref{ass1} and \ref{ass2} be satisfied. Then the constraint equations \eqref{constraint}, with $K$ defined by \eqref{K}, lead to Kron reduced graph with incidence matrix $\widehat{D}$ and $\hat{m}$ edges. Assume additionally that the matrix $S(\hat{y})$ in \eqref{diagonal1} is a Hessian matrix, or, equivalently, its elements satisfy \eqref{integ}. Then there exist twice differentiable strongly convex functions $\widehat{G}_j,  j=1, \cdots, \hat{m},$ defining a nonlinear network as in Definition \ref{nonnet}.
\end{theorem}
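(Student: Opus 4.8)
The plan is to assemble Theorem~\ref{theoremtwo} from the machinery already developed, treating it as the cycle-admitting analogue of Theorem~\ref{theoremone}. The setup is identical up through the construction of the Kron reduced graph: the constraint equations \eqref{constraint} together with Assumptions~\ref{ass1} and~\ref{ass2} produce a reduced incidence matrix $\widehat{D}$ with $\hat{m}$ edges, and \eqref{Schur} identifies $\frac{\partial^2 \widehat{K}}{\partial z_B^2}(z_B)$ as the Schur complement of the Laplacian $\frac{\partial^2 K}{\partial z^2}$, hence itself a Laplacian admitting the factorization \eqref{Laplacehat} with positive diagonal $W(z_B)=\widehat{W}(\widehat{D}^\top z_B)$. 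The only difference from the acyclic case is that $\ker\widehat{D}$ may be nontrivial, so the function $\widehat{G}$ in \eqref{fact} is pinned down by $\widehat{K}$ only on the subspace $\im\widehat{D}^\top$, and \eqref{diagonal} is replaced by the weaker \eqref{diagonal1}, namely $\widehat{W}(\hat{y})=\frac{\partial^2\widehat{G}}{\partial\hat{y}^2}(\hat{y})+FS(\hat{y})F^\top$ with $\im F=\ker\widehat{D}$, via Lemma~\ref{lemma} applied to $\widehat{D}\bigl(\widehat{W}-\frac{\partial^2\widehat{G}}{\partial\hat{y}^2}\bigr)\widehat{D}^\top=0$.

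First I would exploit the freedom in $\widehat{G}$. In the adapted coordinates where $\widehat{D}=\bigl[\begin{smallmatrix}I_\ell & 0\\ 0 & 0\end{smallmatrix}\bigr]$ and $F=\bigl[\begin{smallmatrix}0\\ I_{\hat{m}-\ell}\end{smallmatrix}\bigr]$, the subspace $\im\widehat{D}^\top$ is spanned by the first $\ell$ coordinates, so $\widehat{G}$ is already fixed there but may be redefined arbitrarily in its dependence on the remaining $\hat{m}-\ell$ coordinates. The goal is to add a correction term to $\widehat{G}$ whose Hessian equals $-FS(\hat{y})F^\top$, which in these coordinates is a matrix supported entirely on the lower-right $(\hat{m}-\ell)\times(\hat{m}-\ell)$ block, i.e.\ precisely the block over which $\widehat{G}$ is free. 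A symmetric matrix field is the Hessian of some scalar function exactly when it is curl-free, i.e.\ when the integrability conditions \eqref{integ} hold; this is the classical Poincar\'e-type statement cited to \cite{vds11}. By hypothesis $S(\hat{y})$ is a Hessian, so $-FS(\hat{y})F^\top$ is as well, and one can choose a potential $\Phi(\hat{y})$ with $\frac{\partial^2\Phi}{\partial\hat{y}^2}=-FS(\hat{y})F^\top$; since this Hessian involves only the free coordinates, replacing $\widehat{G}$ by $\widehat{G}+\Phi$ leaves \eqref{fact} intact while forcing \eqref{diagonal} to hold for the redefined $\widehat{G}$.

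Once \eqref{diagonal} is restored, the theorem follows by invoking Proposition~\ref{decomposition} verbatim: the diagonality of $\widehat{W}$ makes the mixed second partials of $\widehat{G}$ vanish, so $\widehat{G}$ separates as a sum $\widehat{G}_1(\hat{y}_1)+\cdots+\widehat{G}_{\hat{m}}(\hat{y}_{\hat{m}})$ of scalar functions, each twice differentiable, and each strongly convex because the corresponding diagonal entry of $\widehat{W}=\frac{\partial^2\widehat{G}}{\partial\hat{y}^2}$ is strictly positive. These are the edge functions of the reduced nonlinear network in the sense of Definition~\ref{nonnet}.

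I expect the main obstacle — and the reason the Hessian hypothesis on $S(\hat{y})$ is imposed rather than proved — to be the global integrability and bookkeeping in the coordinate-change step. One must verify that the correction $-FS(\hat{y})F^\top$, expressed in the adapted coordinates, genuinely touches only the free block of $\widehat{G}$ so that adding $\Phi$ does not disturb the values already fixed by \eqref{fact} on $\im\widehat{D}^\top$; this requires care that the coordinate change on $\hat{y}$ and the complementary decomposition $\im\widehat{D}^\top\oplus\im F$ are compatible, using $\ker\widehat{D}^\top=\spa\mathds{1}$ to control $\widehat{D}$. The subtle point is that the integrability conditions \eqref{integ} guarantee only \emph{local} existence of a potential, so strictly one should note that the relevant domain (or the affine structure of $\mR^{\hat{m}}$) is simply connected, making the Poincar\'e lemma yield a global $\Phi$; this is presumably what \cite{vds11} supplies. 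The strong-convexity conclusion then rides for free on the positivity already established for $W(z_B)$, so no additional estimate is needed there.
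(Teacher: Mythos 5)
Your proposal is correct and follows essentially the same route as the paper: Lemma~\ref{lemma} yields \eqref{diagonal1}, the adapted coordinates expose the freedom of $\widehat{G}$ on the complement of $\im \widehat{D}^\top$, the Hessian hypothesis on $S(\hat{y})$ lets you absorb $FS(\hat{y})F^\top$ into a redefinition of $\widehat{G}$, and the argument of Proposition~\ref{decomposition} then delivers the strongly convex edge functions $\widehat{G}_j$. Your added remarks on global integrability (simple connectedness of $\mR^{\hat{m}}$) and on checking that the correction term does not disturb \eqref{fact} on $\im \widehat{D}^\top$ are sound refinements of details the paper leaves implicit.
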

\begin{remark}
{\rm
Note that all results on the relation between the original graph and the Kron reduced graph in the linear case remain true in the nonlinear setting, since the Kron reduced graph is still determined by the Schur complement \eqref{Schur} of the Laplacian matrix corresponding to the original graph. In particular, there exists in the Kron reduced graph of the nonlinear network an edge between boundary nodes $i$ and $k$ if and only if the $(i,k)$-th element of this Schur complement is non-zero. Furthermore, also in the nonlinear case the Kron reduced graph has an edge between boundary nodes $i$ and $k$ if and only if there is a path from $i$ to $k$ in the original graph. Finally, \eqref{Schur} can be obtained by successively eliminating the central nodes one by one \cite{vds10}.
}
\end{remark}
\begin{remark}
{\rm
With $z_B$ denoting the vector of \emph{inputs} of the network, the vector $\frac{\partial K}{\partial z}(z)$ is the natural vector of \emph{outputs} of the network. (For example, in a nonlinear resistor network these outputs are the \emph{nodal currents} at the boundary nodes, with $z_B$ denoting the boundary voltage potentials.) Similarly, the input-output map of the Kron reduced nonlinear network is given by $z_B \mapsto \frac{\partial \widehat{K}}{\partial z_B}(z_B) = \frac{\partial K}{\partial z_B}(z_C(z_B),z_B)$. 
}
\end{remark}
\begin{remark}
{\rm
In the linear case $K(z)= \frac{1}{2}z^\top D \bar{G} D^\top z$ for a diagonal matrix $\bar{G}$, and it follows \cite{bollobas} that $K(z)$ is independent of the orientation of the graph. However, for general $G(y)$ the function $K(z)=G(D^\top z)$ \emph{does} depend on the chosen orientation; as illustrated in Example \ref{example}.
}
\end{remark}

\section{Application to nonlinear resistor and memristor networks}
\label{sec3}
\subsection{Nonlinear resistor networks}
As already mentioned in Remark \ref{resis}, in the context of a nonlinear resistor network
the vector $z \in \mR^n$ is the vector of \emph{voltage potentials} $\psi$ associated with the nodes. Furthermore, $y=D^\top z$ is the vector of \emph{voltages} $V$ across the edges, and the current through the $j$-th edge is given by
\bq
I_j = \frac{\partial G_j}{\partial V_j}(V_j) .
\eq
In this context the functions $G_j$ are also referred to as \emph{co-content} functions; see \cite{millar,jeltsema}. Since the $G_j$ are assumed to be strongly convex, the nonlinear conductance functions $\frac{\partial G_j}{\partial V_j}(V_j)$ are strictly monotonically increasing. In particular, if additionally $\frac{\partial G_j}{\partial V_j}(0)=0$, then the power $V_j \frac{\partial G_j}{\partial V_j}(V_j)$ dissipated at every edge is greater than or equal to zero. Note that a \emph{linear} resistor $I_j= \bar{G}_jV_j$ with conductance $\bar{G}_j >0$ corresponds to a \emph{quadratic} function $G_j(V_j)=\frac{1}{2}\bar{G}_jV_j^2$.

Furthermore, $\frac{\partial K}{\partial z}(z)$ is the vector of \emph{nodal currents} $J$ at the nodes of the network. Hence $V^\top I=y^\top \frac{\partial G}{\partial y}(y)$ equals the total dissipated power at all the edges, and $\psi^\top J=  z^\top \frac{\partial K}{\partial z}(z)$ is the total power at the nodes. Since $V=D^\top \psi$ (Kirchhoff's voltage law) and $D I = J$ (Kirchhoff's current law), one obtains the power balance 
\bq
V^\top I= \psi^\top D I = \psi^\top J .
\eq
Kron reduction with respect to a splitting of the nodes into boundary and central nodes, and of the voltage potentials $z=\psi$ into $z=(z_C,z_B)=(\psi_C,\psi_B)$, corresponds to \emph{zero} nodal currents at the central nodes, that is
\bq
\frac{\partial K}{\partial \psi_C}(\psi_C,\psi_B)=J_C=0 .
\eq
If by Assumption \ref{ass1} this is solved for the central voltage potentials $\psi_C$ as a function of the boundary voltage potentials $\psi_B$, and furthermore Assumption \ref{ass2} is satisfied, then by Theorem \ref{theoremone} (acyclic case) or Theorem \ref{theoremtwo} (general case under additional condition) this results in a new nonlinear resistor network whose nodes are the boundary nodes of the original network. 

A special case of Kron reduction is to consider only \emph{two} boundary nodes, while all other nodes are central. Elimination of the central nodes (imposing that the nodal currents at the central nodes are zero), results in a Kron reduced nonlinear network with only two nodes, and a \emph{single} nonlinear conductance specified by $\widehat{G}(V)$, with $V$ the voltage across the edge between the two boundary nodes. (Note that this case is trivially covered by Theorem \ref{theoremone}.) This co-content function can be called the \emph{effective} co-content function, defining the \emph{effective conductance} $I= \frac{d \widehat{G}}{d V}(V)$.

In a \emph{linear} resistor network, the function $K(z)$ takes the quadratic form $K(z) = \frac{1}{2}z^\top D \bar{G} D^\top z$,
with $\bar{G}$ the diagonal matrix of positive conductances at the edges. In this case $K(z)$ is equal to the total \emph{dissipated power} in the network. Therefore, solving the constraint equations $0 = \frac{\partial K}{\partial z_C}(z_C,z_B)$ for $z_C$ means \emph{minimizing} the dissipated power as a function of $z_C$ for given $z_B$. This equivalence is called \emph{Maxwell's minimum heat theorem} (or Thomson's principle); see \cite{bollobas,millar}. This does \emph{not} hold anymore for nonlinear resistor networks, although $K(z)$ still has \emph{dimension} of power. In fact, for general $K$ the dissipated power is given as
\bq
\label{dissipated}
z^\top \frac{\partial K}{\partial z}(z) = z^\top D \frac{\partial G}{\partial y}(D^\top z) .
\eq
The minimization of this expression over $z_C$ (given $z_B$) is the same as solving $0 = \frac{\partial K}{\partial z_C}(z_C,z_B)$ over $z_C$ if
$z^\top \frac{\partial K}{\partial z}(z) = k K(z)$
for some positive $k$. By Euler's homogeneous function theorem this yields the following partial result.
\begin{proposition}
If $K$ is homogeneous of degree $k$ for some $k>0$, then solving $0 = \frac{\partial K}{\partial z_C}(z_C,z_B)$ for $z_C$ is the same as minimizing the dissipated power over $z_C$.
\end{proposition}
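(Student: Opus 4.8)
The plan is to invoke Euler's homogeneous function theorem to convert the dissipated power into a positive scalar multiple of $K$ itself, and then to recognize the constraint \eqref{constraint} as the stationarity condition of a strictly convex minimization in $z_C$. The role of homogeneity is entirely concentrated in the first step; everything afterwards rests on facts already established.

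First I would apply Euler's theorem to the assumed homogeneity. Writing $K(\lambda z) = \lambda^k K(z)$, differentiating with respect to $\lambda$, and setting $\lambda = 1$, one obtains the identity $z^\top \frac{\partial K}{\partial z}(z) = k K(z)$. Comparing this with the expression \eqref{dissipated} for the dissipated power shows that, on the whole domain, the dissipated power equals $kK(z)$. This is the key collapse: the gradient of the dissipated power with respect to $z_C$, which a priori is the more complicated object $\frac{\partial}{\partial z_C}\bigl[z^\top \frac{\partial K}{\partial z}\bigr]$, becomes simply $k\frac{\partial K}{\partial z_C}$, so its vanishing is exactly the constraint \eqref{constraint}.

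Next, fix $z_B$ and regard all quantities as functions of $z_C$ alone. Since $k>0$, minimizing the dissipated power $z^\top \frac{\partial K}{\partial z}(z) = kK(z)$ over $z_C$ is equivalent to minimizing $K(z_C,z_B)$ over $z_C$, and the first-order condition for the latter is precisely $\frac{\partial K}{\partial z_C}(z_C,z_B)=0$. To conclude that solving the constraint and minimizing are genuinely the same operation — rather than merely sharing a critical point — I would use the strict convexity of $K$ in $z_C$ already recorded earlier, namely $\frac{\partial^2 K}{\partial z_C^2}(z_C,z_B)>0$. This makes $z_C \mapsto K(z_C,z_B)$ strictly convex, so any stationary point is its unique global minimizer; existence of such a point is furnished by Assumption \ref{ass1}, which supplies the solution $z_C(z_B)$ of \eqref{constraint}.

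The statement is therefore almost immediate once the Euler identity is in hand, and I do not expect a real obstacle. The only point requiring care is precisely the one highlighted above: one must not attempt to differentiate the dissipated power $z^\top \frac{\partial K}{\partial z}$ directly in $z_C$ and match it term by term to $\frac{\partial K}{\partial z_C}$, since the naive derivative contains extra terms. Euler's theorem is exactly what replaces this derivative by $k\frac{\partial K}{\partial z_C}$, after which strict convexity in $z_C$ identifies the unique minimizer with the unique solution of the constraint.
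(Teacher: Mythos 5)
Your proposal is correct and follows essentially the same route as the paper: the paper's argument is exactly the Euler identity $z^\top \frac{\partial K}{\partial z}(z) = kK(z)$ collapsing the dissipated power \eqref{dissipated} to a positive multiple of $K$, after which minimizing over $z_C$ reduces to the stationarity condition \eqref{constraint}. Your additional remark that $\frac{\partial^2 K}{\partial z_C^2}>0$ identifies the stationary point as the unique global minimizer makes explicit a step the paper leaves implicit, but it is the same argument.
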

\begin{example}
\label{example}
As an application of Theorem \ref{theoremone}, consider the \emph{series interconnection} of two nonlinear resistors, both with conductance function
\bq
\label{char}
I = e^V - 1,
\eq
and thus co-content function $e^V-V$. This describes an idealized (and normalized) $I$-$V$ characteristic of a p-n diode; see e.g. \cite{chuadesoer}. The underlying graph is the line graph consisting of two boundary nodes with variables $z_1$ and $z_2$, and one central node with variable $z_0$. Since the characteristic \eqref{char} is \emph{not} odd, the orientation of the edges does matter. Choosing \emph{opposite} orientation for the two edges one obtains 
\bq
\label{KR}
K(z_0,z_1,z_2)= e^{z_0-z_1} - (z_0-z_1) + e^{z_0-z_2}  - (z_0-z_2).
\eq
Then Kron reduction leads to the constraint equation
\[
0=\frac{\partial K}{\partial z_0}(z_0,z_1,z_2)= e^{z_0} \left( e^{-z_1} + e^{-z_2} \right) -2,
\]
and thus $e^{z_0} \left( e^{-z_1} + e^{-z_2}\right) = 2, z_0= - \ln \left(e^{-z_1} + e^{-z_2} \right) + \ln 2$. Substitution into \eqref{KR} yields
\[
\widehat{K}(z_1,z_2)= 2 \ln \left(e^{-z_1} + e^{-z_2} \right) + z_1 + z_2 + 2 - 2 \ln 2.
\]
It is immediately checked that $\widehat{K}$ is shift invariant, and thus can be written as $\widehat{G}(z_1 -z_2)$ for some function $\widehat{G}$, which is computed as follows. Define $x_1:=z_1 - z_2, x_2:=z_1 + z_2$, and express $\widehat{K}$ in these new variables as
\[
\begin{array}{rcl}
\widehat{K} \!\! & =  & \! \! 2 \ln (e^{-\frac{1}{2}(x_1 + x_2)} + e^{-\frac{1}{2}(-x_1 + x_2)}) + x_2+ 2 - 2\ln 2 \\[2mm]
& = & \! \!  2 \ln \left[ e^{-\frac{1}{2}x_2} (e^{-\frac{1}{2}x_ 1} + e^{\frac{1}{2}x_1}) \right] + x_2 + 2 - 2 \ln 2 \\[2mm]
& = & \! \!  2 \ln (e^{-\frac{1}{2}x_ 1} + e^{\frac{1}{2}x_1}) + 2 - 2 \ln 2 .
\end{array}
\]
Denoting the voltage $V=z_1 -z_2=x_1$ one thus obtains
\[
\widehat{G}(V)= 2 \ln (e^{-\frac{1}{2}V} + e^{\frac{1}{2}V}) + 2 - 2 \ln 2,
\]
by differentiation leading to the \emph{effective conductance}
\bq
\label{I}
I = \frac{e^{\frac{1}{2}V} - e^{-\frac{1}{2}V}}{e^{\frac{1}{2}V} + e^{-\frac{1}{2}V}}\; 
= \tanh \frac{V}{2} 
\eq
of the equivalent nonlinear resistor. On the other hand, choosing instead the \emph{same} orientation for the two edges, a similar computation yields an equivalent resistor with effective conductance $I = e^{\frac{1}{2}V} -1$.
\end{example}

\subsection{Memristor networks}
An ideal memristor \cite{chua} is defined by a relation between \emph{charge} $q$ and \emph{flux} $\varphi$ such that the power $\dot{q} \cdot \dot{\varphi}$ is greater than or equal to zero. A flux-controlled memristor is defined by a function $q=M(\varphi)$ with $\frac{dM}{d \varphi}(\phi)\geq 0$, called the \emph{memductance} function. Throughout we will assume the slightly stronger requirement that $M$ is strongly convex.

In the context of a \emph{network} of flux-controlled memristors, briefly \emph{memristor network}, $z$ is the vector of \emph{nodal fluxes} associated with the nodes, and $y=D^\top z$ is the vector $\varphi$ of fluxes across the edges. Furthermore, $\frac{\partial G}{\partial y}(y)$ is the vector $q$ of charges at the edges (with the function $G_j$ specifying the $j$-the memristor), while $\frac{\partial K}{\partial z}(z)$ is the vector of \emph{nodal} charges. The function $K(z)= G(D^\top z)$ also appeared in \cite{chua}, and was called there the \emph{action}. 

Kron reduction of a memristor network amounts to setting the charges at the central nodes of the graph equal to zero. Application of the theory of the preceding section shows how this results in a Kron reduced memristor network that only consists of the boundary nodes, with strongly convex flux-controlled memristors at the $\hat{m}$ new edges of the Kron reduced graph. By taking as boundary nodes any pair of nodes of a memristor network, Kron reduction leads to a single memristor between these two nodes, whose memductance function is the \emph{effective} memductance between the two selected nodes.

\section{Outlook}
\label{sec5}
From a mathematical point of view the main reasoning of this paper is as follows. Given the function $K(z)=G(D^\top z)$ associated to the nonlinear network, the function $\widehat{K}(z_B)$ obtained from $K(z)$ by solving for $z_C$ from $0=\frac{\partial K}{\partial z_C}(z_C,z_B)$ is, under appropriate assumptions, of the same form $\widehat{K}(z_B)=\widehat{G}(\widehat{D}^\top z_B)$. Here $\widehat{D}$ is the incidence matrix of the Kron reduced graph and $\widehat{G}$ is a diagonal mapping whose elements define the 'conductances' of the new edges. Somewhat surprisingly, this is approached by considering the Hessian matrix of $K$, which for every $z$ is a Laplacian matrix, and by using the result of \cite{vds10} that any Schur complement of a Laplacian matrix is again a Laplacian matrix. 

Assumptions \ref{ass1} and \ref{ass2} turned out to be instrumental in deriving the main results. 
Assumption \ref{ass1} can be considered as a regularity assumption, but an explicitly checkable condition for Assumption \ref{ass2} would be desirable. While satisfaction of Assumptions \ref{ass1} and \ref{ass2} is sufficient in case the Kron reduced graph does not contain cycles (Theorem \ref{theoremone}), the general case (Theorem \ref{theoremtwo}) additionally requires the integrability conditions \eqref{integ}. Again, obtaining more easily checkable conditions for satisfaction of \eqref{integ} would be desirable.

Overall, the computation of the mapping $\widehat{G}$ is much more demanding than in the linear case. More insight into this would provide valuable information about the properties of the Kron reduced nonlinear network, and in particular about the effective conductance and effective memductance of Kron reduced nonlinear resistor, respectively, memristor, networks. Kron reduction of other examples of nonlinear networks, see e.g. the discussion in \cite{vds,vanderschaftmaschkeBosgra}, can be studied in more detail as well. Another venue for research concerns the \emph{synthesis} problem of nonlinear networks; see already \cite{vds10} for the linear case.



\section*{Appendix}
Let us summarize Theorem 3.1 in \cite{vds10}. Consider as before a directed graph with $n \times m$ incidence matrix $D$. Let $W$ be a positive definite $m \times m$ diagonal matrix. The matrix $DWD^\top$ is called a \emph{Laplacian matrix} (with weight matrix $W$). 
\begin{theorem}\label{theorem1}
\begin{enumerate}
\item
The Laplacian $DWD^\top$ is symmetric, positive semi-definite, and independent of the orientation of the graph. Furthermore, it has all diagonal elements $\geq 0$, all off-diagonal elements $\leq 0$, and has zero row and column sums. The vector $\mathds{1}$ is in the kernel of $DWD^\top$, and if the graph is connected then $\ker DWD^\top = \spa \mathds{1}$. If the graph is connected, then all diagonal elements of $DWD^\top$ are strictly positive.
\item
Every positive semi-definite matrix $L$ with diagonal elements $\geq 0$, off-diagonal elements $\leq 0$, and with zero row and column sums can be written as the Laplacian matrix $L=DWD^\top$, with $D$ an incidence matrix, and $W$ a positive definite diagonal matrix.
\item
If the graph is connected, then all Schur complements of $DWD^\top$ are well-defined, positive semi-definite, with diagonal elements $>0$, off-diagonal elements $\leq 0$, and with zero row and column sums. In particular, all Schur complements of $DWD^\top$ can be written as $\widehat{D}\widehat{W}\widehat{D}^\top$, with $\widehat{D}$ the incidence matrix of a connected graph, and $\widehat{W}$ a positive definite diagonal matrix. 
\end{enumerate}
\end{theorem}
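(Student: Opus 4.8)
The plan is to treat the three parts in order, since each later part leans on the earlier ones. For part 1 I would read off everything directly from the column structure of the incidence matrix $D$: each column has exactly one $+1$, one $-1$, and zeros elsewhere. Symmetry of $DWD^\top$ is immediate from $W=W^\top$, and positive semi-definiteness follows from $x^\top DWD^\top x = (D^\top x)^\top W (D^\top x)\ge 0$ because $W$ is positive definite. Orientation independence I would get by noting that reversing an edge replaces the corresponding column of $D$ by its negative, i.e. $D\mapsto DE$ with $E$ diagonal and $E^2=I$, whence $DEWED^\top = DWD^\top$. The sign pattern is equally direct: the $(i,i)$ entry is $\sum_j w_j D_{ij}^2\ge 0$, while for $i\neq k$ the $(i,k)$ entry is $\sum_j w_j D_{ij}D_{kj}$, and each nonzero term corresponds to an edge joining $i$ and $k$, for which $D_{ij}D_{kj}=-1$; hence off-diagonal entries are $\le 0$. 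Since every column of $D$ sums to zero we have $D^\top\mathds{1}=0$, giving $DWD^\top\mathds{1}=0$ and hence zero row and column sums. For the kernel, $DWD^\top x=0$ forces $(D^\top x)^\top W(D^\top x)=0$, so $D^\top x=0$ by positive definiteness of $W$; connectedness then forces $x$ constant, i.e. $\ker DWD^\top=\spa\mathds{1}$. Finally, in a connected graph each node meets at least one edge, so each diagonal entry $\sum_j w_j D_{ij}^2$ is strictly positive.

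For part 2 I would reconstruct a graph from $L$: for each pair $i<k$ with $L_{ik}<0$ introduce an edge of weight $w=-L_{ik}>0$, and assemble the corresponding incidence matrix $D$ and diagonal $W$. By construction the off-diagonal entries of $DWD^\top$ match those of $L$, and the diagonal entries match because the zero row-sum property gives $L_{ii}=-\sum_{k\neq i}L_{ik}=\sum_{k\neq i}w_{ik}$, which is exactly the $(i,i)$ entry of $DWD^\top$. Positive definiteness of $W$ is automatic since only strictly negative off-diagonal entries generate edges.

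Part 3 is the heart of the matter, and here my plan is to avoid computing with $L_{CC}^{-1}$ directly. I would first record that, for a connected graph with a nonempty boundary set $B$, the block $L_{CC}$ is positive definite: it is a principal submatrix of the positive semi-definite $L$, and if $L_{CC}x_C=0$ then $(x_C,0)^\top L (x_C,0)=0$ forces $(x_C,0)\in\ker L=\spa\mathds{1}$, impossible for $x_C\neq 0$ when $B\neq\emptyset$; hence every Schur complement is well-defined. Rather than bound the entries of $L_{CC}^{-1}$ via M-matrix theory, I would exploit the quotient property of Schur complements and eliminate the central nodes \emph{one at a time}. Eliminating a single central node with strictly positive diagonal entry $d$ replaces $L_{ik}$ by $L_{ik}-L_{iC}L_{Ck}/d$; for $i\neq k$ both $L_{iC}$ and $L_{Ck}$ are $\le 0$, so the correction is $\le 0$ and the new off-diagonal entry stays $\le 0$, while positive semi-definiteness and the zero row/column sums are preserved by the scalar Schur-complement identity (equivalently by the kernel computation $(L_{BB}-L_{BC}L_{CC}^{-1}L_{CB})\mathds{1}_B=0$ using $L\mathds{1}=0$). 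This single-node step keeps the matrix a genuine Laplacian, and it keeps the graph connected because the neighbours of the eliminated node become pairwise joined. Iterating over all central nodes and invoking part 2 then exhibits the full Schur complement as $\widehat{D}\widehat{W}\widehat{D}^\top$ with $\widehat{D}$ the incidence matrix of a connected graph and $\widehat{W}$ positive definite; connectedness finally yields, via part 1, that the diagonal entries are strictly positive.

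The main obstacle I anticipate is precisely the off-diagonal sign condition together with well-definedness: it is tempting to prove off-diagonal negativity via nonnegativity of $L_{CC}^{-1}$, an M-matrix fact, but that route is heavier than necessary. The cleaner path is the one-node-at-a-time elimination, where every sign propagation is a triviality about scalars, and the only genuinely structural inputs are the positive definiteness of $L_{CC}$ (from connectedness and a nonempty boundary) and the preservation of connectedness under elimination. Once these are in place, parts 1 and 2 assemble the conclusion with no further work.
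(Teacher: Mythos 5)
Your proof is correct, but it cannot be ``the same as the paper's'': the paper does not prove this theorem at all --- it is stated in the Appendix purely as a summary of Theorem 3.1 of \cite{vds10}, whose proof of the crucial third part in turn relies on an argument from \cite{willemsverriest} (in essence an electrical/maximum-principle style argument for the sign of the off-diagonal entries of the Schur complement, equivalent to the M-matrix fact that $L_{CC}^{-1}$ has nonnegative entries). Your route replaces exactly that step by single-node elimination combined with the quotient property of Schur complements, which is the device the paper itself alludes to in the remark ``\eqref{Schur} can be obtained by successively eliminating the central nodes one by one''. What your approach buys: every sign assertion reduces to a statement about scalars, and the real crux is your observation that the off-diagonal update $L_{ik} \mapsto L_{ik} - L_{ic}L_{ck}/d$ is a sum of two nonpositive terms, so no cancellation can occur --- existing edges persist and the neighbours of the eliminated node become pairwise joined, which yields preservation of the sign pattern and of connectedness in one stroke (and incidentally illuminates the paper's discussion around Assumption \ref{ass2} that edges of the Kron reduced graph cannot disappear, only appear). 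What the cited route buys instead is a direct, one-shot treatment of the full Schur complement without induction. Three small points to tighten in your write-up: (i) you invoke the quotient property of iterated Schur complements without proof --- cite \cite{crab}, as the paper does in the proof of Lemma \ref{lemmaA}; (ii) the claim that connectedness forces strictly positive diagonal entries implicitly assumes $n \geq 2$ (the one-node graph has empty Laplacian sum, hence zero diagonal); (iii) in part 2 positive semi-definiteness of $L$ is never used in your construction --- it is automatic from diagonal dominance given the sign pattern and zero row sums --- and saying so explicitly would make clear that the reconstruction depends only on the off-diagonal entries and the row sums.
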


\begin{lemma}\label{lemmaA}
If $K(z_C,z_B)$ is strongly convex, then $\widehat{K}(z_B)= K(z_C(z_B),z_B)$, with $z_C(z_B)$ determined by $\frac{\partial K}{\partial z_C}(z_C,z_B)=0$, is also strongly convex.
\end{lemma}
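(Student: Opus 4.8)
The plan is to recognize $\widehat K$ as the \emph{partial minimum} of $K$ over the central variables and to show that this operation preserves strong convexity. First I would use strong convexity to justify the construction of $z_C(z_B)$: for each fixed $z_B$ the block $\frac{\partial^2 K}{\partial z_C^2}$ is bounded below by $\mu I$, where $\mu>0$ is the strong convexity modulus of $K$, so $K(\cdot,z_B)$ is strongly convex, hence coercive, in $z_C$. It therefore has a unique global minimizer, which is precisely the unique solution $z_C(z_B)$ of $\frac{\partial K}{\partial z_C}(z_C,z_B)=0$ (so that, incidentally, global solvability is automatic here). Consequently
\[
\widehat K(z_B) = K(z_C(z_B),z_B) = \min_{z_C} K(z_C,z_B),
\]
exhibiting $\widehat K$ as an infimal projection of $K$.

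The main step is then to show that the infimal projection of a strongly convex function is strongly convex with the \emph{same} modulus $\mu$. I would do this by isolating the quadratic part. Writing $w=(z_C,z_B)$, strong convexity means $\tilde K(w):=K(w)-\frac{\mu}{2}\|w\|^2$ is jointly convex, and then
\[
K(z_C,z_B) = \tilde K(z_C,z_B) + \tfrac{\mu}{2}\|z_C\|^2 + \tfrac{\mu}{2}\|z_B\|^2 .
\]
Since the term $\frac{\mu}{2}\|z_B\|^2$ does not depend on $z_C$, minimizing over $z_C$ gives $\widehat K(z_B)=h(z_B)+\frac{\mu}{2}\|z_B\|^2$ with $h(z_B):=\min_{z_C}\bigl[\tilde K(z_C,z_B)+\frac{\mu}{2}\|z_C\|^2\bigr]$. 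The function in brackets is jointly convex in $(z_C,z_B)$, and I would invoke the standard fact that the partial minimum of a jointly convex function is convex. Hence $h$ is convex and $\widehat K$ is the sum of a convex function and $\frac{\mu}{2}\|z_B\|^2$, i.e. strongly convex with modulus $\mu$.

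I expect the crux to be this partial-minimization lemma and, in particular, the bookkeeping of the modulus. The convexity of $h$ follows from the usual infimal-projection inequality: for $z_B^0,z_B^1$ and $\lambda\in[0,1]$, evaluating the jointly convex integrand at $\bigl(\lambda z_C^0+(1-\lambda)z_C^1,\ \lambda z_B^0+(1-\lambda)z_B^1\bigr)$, with $z_C^0,z_C^1$ the minimizers attaining $h(z_B^0),h(z_B^1)$, bounds $h(\lambda z_B^0+(1-\lambda)z_B^1)$ by $\lambda h(z_B^0)+(1-\lambda)h(z_B^1)$; strong convexity guarantees these minimizers exist, so no $-\infty$ pathologies arise. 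An alternative and fully equivalent route, specific to this paper, is purely Hessian-based: $\widehat K$ is twice differentiable and its Hessian is the Schur complement \eqref{Schur} of the Hessian of $K$; since $\frac{\partial^2 K}{\partial z^2}\succeq \mu I$ and the Schur complement admits the variational description $v^\top(\text{Schur})\,v=\min_u\, [u;v]^\top \frac{\partial^2 K}{\partial z^2}[u;v] \ge \mu\|v\|^2$, one reads off $\frac{\partial^2\widehat K}{\partial z_B^2}\succeq \mu I$ directly. This second route is the shortest given that \eqref{Schur} is already available, while the first is more robust as it requires no differentiability.
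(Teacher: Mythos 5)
Your proof is correct, but your primary route is genuinely different from the paper's. The paper's proof is precisely the Hessian argument you relegate to an alternative at the end: by \eqref{Schur}, the Hessian of $\widehat{K}$ is the Schur complement of the Hessian of $K$ with respect to the block $\frac{\partial^2 K}{\partial z_C^2}$, and the Schur complement of a positive definite matrix is again positive definite (the paper cites Crabtree--Haynsworth for this fact, whereas your variational identity $v^\top(\mathrm{Schur})\,v=\min_u\,[u;v]^\top \frac{\partial^2 K}{\partial z^2}[u;v]$ proves it directly and additionally tracks the modulus). Your main argument---peeling off the quadratic $\tfrac{\mu}{2}\|\cdot\|^2$ and invoking preservation of convexity under partial minimization---buys several things the paper's proof does not: it needs no second derivatives (indeed no differentiability of $z_C(\cdot)$ at all, since $\widehat{K}=\min_{z_C}K$), it shows the strong convexity modulus is preserved exactly, and it observes that under strong convexity Assumption \ref{ass1} is automatic, because the unique global minimizer of the coercive function $K(\cdot,z_B)$ is exactly the solution of the first-order condition. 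It also meshes naturally with the paper's own proof of Lemma \ref{lemmaB}, which exploits the same minimization characterization of $z_C(z_B)$. One caveat: the two proofs read ``strongly convex'' differently. The paper's proof treats it as ``Hessian positive definite at every point,'' under which your quadratic-extraction step needs a uniform $\mu>0$ that may fail to exist (the pointwise smallest eigenvalue can decay to zero), while the Schur-complement route still works pointwise; your proof uses the standard modulus definition $\frac{\partial^2 K}{\partial z^2}\succeq \mu I$, under which both arguments go through and yours gives the sharper conclusion. This is a looseness in the paper's terminology rather than a gap in your argument, but your first route does genuinely require the uniform modulus.
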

\begin{proof}
Strong convexity is equivalent to the Hessian matrix being positive definite. Thus, since $K(z)$ is strongly convex its Hessian matrix is positive definite. On the other hand, in view of \eqref{Schur}, the Hessian matrix of $\widehat{K}(z_B)$ is a Schur complement of the Hessian matrix of $K(z)$. Since the Schur complement of a positive definite matrix is again positive definite, cf. \cite{crab}, the result follows.
\end{proof}
\begin{lemma}\label{lemmaB}
$\widehat{K}$ is invariant under shifts $z_B \mapsto z_B + c \mathds{1}$. 
\end{lemma}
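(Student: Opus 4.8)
The plan is to exploit the fact that the full function $K$ is already shift invariant and that the constraint defining $z_C(z_B)$ inherits this invariance, so that the solution map itself commutes with shifts. Writing the all-ones vector of $\mR^n$ as $\mathds{1}=(\mathds{1}_C,\mathds{1}_B)$ in the split coordinates $z=(z_C,z_B)$, the shift $z_B\mapsto z_B+c\mathds{1}$ in the statement is the boundary component of the global shift $z\mapsto z+c\mathds{1}$.

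First I would record that the gradient $\frac{\partial K}{\partial z}(z)=D\frac{\partial G}{\partial y}(D^\top z)$ from \eqref{K1} is itself shift invariant: since $D^\top\mathds{1}=0$ one has $D^\top(z+c\mathds{1})=D^\top z$, hence $\frac{\partial K}{\partial z}(z+c\mathds{1})=\frac{\partial K}{\partial z}(z)$ for every $c\in\mR$. Restricting to the central block gives
\[
\frac{\partial K}{\partial z_C}(z_C+c\mathds{1}_C,\,z_B+c\mathds{1}_B)=\frac{\partial K}{\partial z_C}(z_C,z_B).
\]
Next I would show that $z_C(z_B+c\mathds{1}_B)=z_C(z_B)+c\mathds{1}_C$. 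If $z_C(z_B)$ satisfies the constraint \eqref{constraintsolved} at $z_B$, then by the displayed identity the shifted vector $z_C(z_B)+c\mathds{1}_C$ satisfies the constraint at $z_B+c\mathds{1}_B$. Since $\frac{\partial^2 K}{\partial z_C^2}>0$, the map $z_C\mapsto\frac{\partial K}{\partial z_C}(z_C,z_B)$ is strictly monotone, so for each fixed boundary argument the constraint has at most one solution in $z_C$; together with Assumption \ref{ass1}, which furnishes a single-valued map $z_C(\cdot)$, uniqueness forces the claimed equality.

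Finally I would substitute into the definition of $\widehat{K}$ and invoke the full shift invariance of $K$:
\[
\widehat{K}(z_B+c\mathds{1}_B)=K\big(z_C(z_B)+c\mathds{1}_C,\,z_B+c\mathds{1}_B\big)=K\big(z_C(z_B),z_B\big)=\widehat{K}(z_B).
\]
The computation is short; the only point requiring care is the uniqueness argument establishing that the solution map commutes with shifts, which is precisely where strict convexity in $z_C$ (equivalently $\frac{\partial^2 K}{\partial z_C^2}>0$) together with the single-valuedness built into Assumption \ref{ass1} is used. I expect no serious obstacle beyond being explicit about this step.
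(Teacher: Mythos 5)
Your proof is correct, but it takes a genuinely different route from the paper's. You establish first the equivariance $z_C(z_B + c\mathds{1}) = z_C(z_B) + c\mathds{1}$ of the solution map, by combining shift invariance of the gradient of $K$ (from $D^\top \mathds{1} = 0$) with uniqueness of solutions of the constraint \eqref{constraint}, the latter extracted from strict monotonicity of $z_C \mapsto \frac{\partial K}{\partial z_C}(z_C,z_B)$ (its Jacobian $\frac{\partial^2 K}{\partial z_C^2}$ is positive definite everywhere); invariance of $\widehat{K}$ then follows by plain substitution. The paper argues variationally instead: since $K(\cdot,z_B)$ is convex, the critical point $z_C(z_B)$ is a global minimizer, and a sandwich of two inequalities (minimality at $z_B$ and at $z_B + c\mathds{1}$) and two equalities (shift invariance of $K$) gives $\widehat{K}(z_B) \leq \widehat{K}(z_B+c\mathds{1}) \leq \widehat{K}(z_B)$ directly; the equivariance of $z_C(\cdot)$, which is the primary fact in your argument, appears there only as a parenthetical byproduct. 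The trade-off is this: the paper's argument needs no uniqueness at all for the invariance conclusion---convexity plus the first-order condition suffices, so it would survive with merely positive semi-definite $\frac{\partial^2 K}{\partial z_C^2}$---whereas your argument needs no convexity or variational interpretation, only that the constraint has at most one solution in $z_C$ for each boundary value, so it would apply verbatim to non-convex settings in which the constraint equations happen to have unique solutions. Both proofs are equally short, and your care in spelling out the uniqueness step (rather than treating the single-valuedness in Assumption \ref{ass1} as automatically compatible with shifts) is exactly where the argument needed to be explicit.
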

\begin{proof}
For each $c \in \mR$
\[
\begin{array}{rcl}
K(z_C(z_B),z_B) & \leq & K(z_C(z_B + c\mathds{1}) -c \mathds{1},z_B) \\[2mm]
& = & K(z_C(z_B +c \mathds{1}),z_B +c \mathds{1}) \\[2mm]
& \leq & K(z_C(z_B) +c \mathds{1},z_B +c \mathds{1}) \\[2mm]
&= & K(z_C(z_B),z_B) .
\end{array}
\]
The two inequalities follow from the fact that $z_C(z_B)$ is minimizing $K(z_C,z_B)$ for each $z_B$, while the two equalities follow from shift invariance of $K$. Hence $\widehat{K}(z_B +c\mathds{1})= \widehat{K}(z_B)$, as claimed. (Note that additionally this shows that $z_C(z_B + c \mathds{1}) = z_C(z_B) + c \mathds{1}$.)
\end{proof}

\section*{Acknowledgement}
The first author thanks Nima Monshizadeh for a discussion leading to the formulation of Lemma \ref{lemma}.

\end{document}